\documentclass{article}

\usepackage{lastpage}
\date{}

%\usepackage{fancyhdr}

%\fancyfoot[C]{Page \thepage\ of \pageref{LastPage}}

%\pagestyle{fancy}

\usepackage{graphicx}
\usepackage{mathtools}
\usepackage{enumerate}
\usepackage{amssymb}
\usepackage{mathptmx}

\usepackage{latexsym}
\newtheorem{theorem}{Theorem}

\newtheorem{definition}{Definition}
\newtheorem{proof}{Proof}

\usepackage{amssymb}

\begin{document}

\title{Bolzano's measurable numbers: are they real?}

\author{Steve Russ and Kate\v{r}ina Trlifajov\'{a}\footnote{Steve Russ, Department of Computer Science, University of Warwick, Coventry CV4 7AL, UK, \emph{steve.russ@warwick.ac.uk}, 
Kate\v{r}ina Trlifajov\'{a}, Faculty of Information Technology, Czech Technical University in Prague, Th\'{a}kurova 9, 160 00 Prague 6, Czech Republic, \emph{katerina.trlifajova@fit.cvut.cz}.}}

\maketitle

\abstract{During the early 1830's Bernard Bolzano, working in Prague, wrote a manuscript giving a foundational account of numbers and their properties. In the final section of his work he described what he called `infinite number expressions' and `measurable numbers'. This work was evidently an attempt to provide an improved proof of the sufficiency of the criterion usually known as the `Cauchy criterion' for the convergence of an infinite sequence. Bolzano had in fact published this criterion four years earlier than Cauchy who, in his work of 1821, made no attempt at a proof. Any such proof required the construction or definition of real numbers and this, in essence, was what Bolzano achieved in his work on measurable numbers. It therefore pre-dates the well-known constructions of Dedekind, Cantor and many others by several decades. Bolzano's manuscript was partially published in 1962 and more fully published in 1976. We give an account of measurable numbers, the properties Bolzano proved about them, and the controversial reception they have prompted since their publication.}

\section{Introduction}

It is now widely accepted that any logically sound development of the limiting processes underlying mathematical analysis, or the calculus, requires the construction, definition or axiomatisation of the domain of real numbers. Or, at least, it requires some explicit assumption about the completeness of a linearly ordered field such that we can guarantee the closure of the field under limiting processes. Such recognition has been slow to be achieved. As late as 1908 the first edition of G.H. Hardy's classic textbook \emph{Pure Mathematics} simply assumed the rational and irrational numbers taken together had suitable algebraic and completeness properties. Only subsequent editions from 1914 gave a detailed account of Dedekind cuts as a construction of reals from the rationals. Both Dedekind and Weierstrass{---}leading professional mathematicians of their time{---}explicitly attributed motivation for their constructions of real numbers to the need, in the context of their teaching from the late 1850's, for a more rigorous basis for the differential and integral calculus.  

It is all the more remarkable therefore that a little-known Bohemian priest, who never did any formal teaching of mathematics, should have not only seen the need for such a foundation for calculus as early as 1817, but in the early 1830's had already gone a long way towards developing a bold, original, framework for a theory of real numbers. Bernard Bolzano's measurable numbers are developed in the final (7\textsuperscript{th}) section of his \emph{Reine Zahlenlehre} (\emph{Pure Theory of Numbers}), here abbreviated to RZ. It was written in three manuscript versions in the early 1830's that are briefly described by the late Jan Berg in (Bolzano 1976) where his transcription of the final version is published. It is the final (7\textsuperscript{th}) section of RZ that is our primary source and wherever paragraph numbers (of the form \S 1) appear in isolation in this chapter they refer implicitly to this final section of RZ. The purpose of this chapter is to outline the achievements in this part of Bolzano's work and to identify, as far as the evidence allows, how Bolzano regarded his measurable numbers. We also endeavour to throw light on the somewhat confused reception his theory has received since its publication. 
  
Bolzano's written output of published works  and unpublished manuscripts was prodigious. It is being given a comprehensive publication in the \emph{Bernard Bolzano Gesamtausgabe }(\emph{Complete Works}) with over 90 volumes of the projected 129 volumes having already appeared. We use the abbreviation BGA for this edition. The best overview of Bolzano's life and work in English is (Morscher 2008) which is particularly good on the work on logic and philosophy but includes useful material on mathematics too.  Some of the most important mathematical works have appeared in English translation, with brief commentary, in (Russ 2004). Much more sustained study of his mathematics appears in (Rusnock 2000) and, in French,  (Sebestik 1992). Bolzano's most important philosophical work, in four volumes, is \emph{Wissenschaftslehre} (\emph{Theory of Science}) now with a complete English translation (Bolzano 1837/2014).
  
In a deservedly much-admired paper of 1817 Bolzano presented an ingenious and original proof, by repeated bisection, of the intermediate value theorem, namely that a continuous real-valued function that changes its sign at the endpoints of the interval $[a,b]$ has a zero somewhere in the open interval $(a, b)$. In \S 7 of (Bolzano 1817) he articulates before Cauchy a general criterion for the convergence of an infinite sequence. And he does so more clearly and concisely than Cauchy and without reference to the infinitely large or the infinitely small as occurs in (Cauchy 1821). It is therefore historically accurate, as some authors are now doing, to refer to the \emph{Bolzano-Cauchy convergence criterion}. We state this here as follows:
\begin{quotation}
If a sequence of terms $a_1, a_2, a_3, \dots  a_n, \dots a_{n+r} ,\dots $ has the property that the difference between its $n$-th term an and every later one $a_{n+r}$, however far this latter term may be from the former, remains smaller than any given quantity if $n$ has been taken large enough, then there is always a certain \emph{constant quantity}, and indeed only \emph{one}, which the terms of this sequence approach and to which they can come as near as we please if the sequence is continued far enough.  (Bolzano 1817, \S 7) \footnote{This is a slightly modified form of  the theorem, the exact version is in (Russ 2004, 266).}
\end{quotation} 
Bolzano attempted a detailed proof of the sufficiency of this criterion. Cauchy simply assumed the criterion without comment. It is an irony that Bolzano is today criticised for his \lq flawed\rq \ proof while Cauchy is not criticised for making the silent assumption. As pointed out in (Stedall 2008, 496), Bolzano's was the \emph{only} attempted proof of the criterion in the early 19\textsuperscript{th} century. We can put \lq flawed\rq \ in inverted commas here because while the result is not true in the field of rationals alone, it is true in the field Hardy was assuming and evidence that this was also the field Bolzano intended comes from his remark near the end of the \emph{Preface} when he summarises the content of the body of his paper saying that, 
\begin{quotation} 
for anyone having a correct concept of quantity the idea of [the limit] is the idea of a real, i.e. an \emph{actual} quantity.
\end{quotation} 
The main work of his attempted proof is to show that the limit is 
\begin{quotation}
not impossible [because] on this assumption it is possible to determine this quantity as accurately as we please.\end{quotation}  
The possibility of some entity was always regarded by Bolzano as a necessary, but not sufficient condition for existence. In this case the possibility, combined with the need, led Bolzano to claim that with a \emph{correct} \ concept of quantity (or number, the two concepts were not sharply distinguished by Bolzano) the convergence criterion was indeed sufficient to ensure the existence of a limit. For detailed discussion of this proof see (Kitcher 1975, 247{--}251), (Rusnock 2000, 69{--}84) and (Russ 2004, 149). 
  
No-one could be in doubt that to maintain a correspondence between numbers and lengths (such as the diagonal of a unit square), or to locate the zeros of some functions (such as $x^2 - 2$), required the use of irrational numbers. What very few people seem to have appreciated in the early 19\textsuperscript{th} century was the logical need to give some construction or definition of the irrationals on the basis of the rationals. That was the problem giving rise to the celebrated solutions such as those of Dedekind, Weierstrass, M\'{e}ray, and Cantor published from 1872 (although originating in the late 1850's). It was this same problem that Bolzano had already addressed, largely successfully but unpublished and quite unknown, in his theory of measurable numbers of the early 1830's. It is the purpose of this chapter to give to give an outline of Bolzano's achievement in this work. The clearest source for the later work, which became a standard reference, is (Dedekind 1963). The original works of the other authors just mentioned can be hard to locate but are well-described, and referenced, in many works, for example in chapter IV of (Ferreiros 1999). 
  
During the 1840's Bolzano realised he would not complete the manuscripts for RZ and he passed these and many other mathematical papers on to Robert Zimmermann, one of his former students and the son of a good friend. Zimmermann gained a Chair in Philosophy in Vienna and deposited a locked suitcase with many of Bolzano's mathematical papers in the Austrian National Library. They languished there until discovered by Professor M. Ja\v{s}ek from Pilsen in the 1920's. Some material was published from the 1930's onwards but it was not until 1962 that a partial transcript of RZ was published as (Rychl\'{i}k 1962). This was in fact only a partial transcription of the final (7\textsuperscript{th}) section of RZ.  A much fuller publication of all sections of RZ including some parts of the 7\textsuperscript{th} section which were illegible to Rychl\'{i}k appeared in (Bolzano 1976).  

\section{Measurable Numbers}
\label{measurable}

The title of this final section of RZ is \lq infinite quantity concepts\rq \ but from his hand-written revisions, and alternative phrasing, it is clear he had difficulty deciding between \lq quantity\rq  and \lq number\rq \ on the one hand, and between \lq concept \rq \ and \lq expression\rq \ on the other hand. We shall not adhere exclusively to any of these phrases but try to keep in mind the ambiguity reflected in Bolzano's usages.  A regular explicit usage was that an infinite quantity \emph{concept} is represented by an infinite quantity \emph{expression}. The following are some of his examples of the latter:
$$1 + 2 + 3 + 4 + \dots \mbox{in inf.}$$
$$\frac{1}{2} - \frac{1}{4} + \frac{1}{8} - \frac{1}{16} + \dots \mbox{in inf.}$$
$$(1 -  \frac{1}{2})(1 - \frac{1}{4})(1 - \frac{1}{8})(1 - \frac{1}{16}) \dots \mbox{in inf.}$$
$$a  +   \frac{b}{1+1+1+\dots \mbox{in inf.}}$$ where $a, b$ is a pair of natural numbers \footnote{It is clear from Bolzano's usage in RZ that this is the appropriate translation of `wirkliche Zahlen'.}. 
The crucial property of an infinite quantity expression is that it should contain infinitely many operations of addition, subtraction, multiplication or division. But Bolzano is at pains to point out that this does not mean we have to have the ideas of all the components of such an expression, we could not. He says it is like the way we can describe, and designate, a pocket watch very simply and without having ideas of the many components inside the watch. So he says that \lq infinite\rq is here being used in a figurative way. The number concept itself is a single thing arising from the multitude of operations. He says that for every number expression $S$ (not only infinite ones), \begin{quotation}
... we \emph{determine by approximation, or measure} [the number expression $S$], if for every positive integer $q$ we determine the integer $p$ that must be chosen so that the two equations
$$ S = \frac{p}{q} + P_1 \textnormal{\ \  and\ \ }S = \frac{p+1}{q} - P_2 $$
arise, in which $P_1$ and $P_2$ denote a pair of strictly positive number expressions (the former possibly being zero). (RZ, \S 6), (Russ 2004, 361). \end{quotation}   

A \emph{strictly positive number expression} is one in which{---}according to the earlier section 4 of RZ{---}\emph{no subtraction appears}. When these two equations are always satisfiable Bolzano says, we determine the number expression $S$, as precisely as we please and then he calls such an expression a \emph{measurable} expression (RZ, \S 6).  The fraction $p \over q$ is then called the \emph{measuring fraction} for $S$.

It appears that we can visualise a measurable expression or number by imagining a line of length $S$ which is being measured by rulers with units divided into $q$ equal parts, a $q$-ruler. For a given ruler either the line will match up exactly with some division marks (and so $S$ is rational), or the line is always strictly in between two divisions. In either case $S$ is measurable. Such a visualisation assumes a close correspondence between a geometric continuum and an arithmetic continuum. In fact the theme of continuum{---}whether geometric, arithmetic, temporal or physical{---}is a powerful underlying theme in much of Bolzano's work from his first publication with his analysis of straight line in 1804 to the explicit treatment of continuum in \S 38 of his well-known \emph{Paradoxes of the Infinite} published as (Bolzano 1851) shortly after his death.

This definition of an expression $S$ as measurable implies that for every positive integer $q$ there is a $p$ so that 
				$$\frac{p}{q} \leq S  <  \frac{p+1}{q}$$
and it means that Bolzano is explicitly associating a measurable number with infinitely many approximating rational intervals. It is easy to show by experiment that these intervals are not nested (readers may like to try this with examples of $S = 2/3$, or $S = \sqrt 2$ ) but it is not difficult{---}another exercise for the reader!{---}to prove that they are \emph{dually directed} in the sense that the intersection of any two (say for $q$ and $q'$ with $q' > q$) is a proper subset of some interval for $q'' $  with $q''  > q' $. But the association of a measurable number with an infinite collection of intervals is significant. The use of infinite collections to define real numbers is a feature common to all  the major constructions that emerged in the later 19\textsuperscript{th} century. 
  
In RZ \S 21 Bolzano introduces the expression
					$$S = \frac{1}{1 + 1 + 1 + \dots \mbox{in inf.}}$$
					
After showing $S$ is measurable with $p= 0$, however large $q$ may be taken,  Bolzano goes on to conclude that  
\begin{quotation} ... we are not justified, at least by the concepts so far, in considering the expression $S$ to be equivalent to $0$. $\dots$ This is an example of an \emph{infinitely small} positive number. \end{quotation} 

This represents a major change in Bolzano's view on the infinitely small from some 15 years earlier when he declared that 
\begin{quotation} ... calculus is based on the shakiest foundations $\dots$ on the self-contradictory concepts of infinitely small quantities. (Bolzano 1816, Preface)\end{quotation}
It would be an interesting project to investigate what influenced his change of mind here. There are many potential sources for this study such as the series \emph{Miscellanea Mathematica} of the BGA (Bolzano's mathematical diaries that he maintained for most of his life), the sustained reflection on infinite collections in (Bolzano 1851), and the correspondence in the series III of the BGA, as well as numerous further archival sources. 
  
The possibility of infinitely small numbers that are measurable gives rise to a crucial revision of his definition of the equality of measurable numbers. In the long discussion of section \S 54, and again as an explicit definition in \S 55, Bolzano states that measurable expressions are equal to one another if for every arbitrary $q$ always one and the same $p$ may be found yielding a measuring fraction $p\over q$ common to both. Otherwise stated,  two numbers were equal if  \begin{quotation} 
they behave in the same way in the process of measuring. \end{quotation}
However, Bolzano inserted a highly significant revision near the beginning of \S 54. He observed that the section needs to be re-written because 
\begin{quotation} numbers which differ only by an infinitely small [amount] can behave differently in the process of measuring. \end{quotation}  
He gives the example of $1$ and $1 - \frac{1}{1 + 1+ 1+ \dots \mbox{in inf.}}$ where the former, for every $q$, has measuring fraction with $p = q$, but the latter has $p = q - 1$. So he revises the definition to say that if the pair of numbers $A$ and $B$ have a difference $A - B$ which in the process of measuring always has a measuring fraction of the form $0/q$ then $A = B$.  And if the difference is positive then $A > B$, if it is negative then $A < B$. Unfortunately the section \S 54, and subsequent sections have not, in fact, been revised other than by way of this inserted note. In \S 56 Bolzano says it is not so much the notion of equality which is extended by his definition but rather it is the object (here measurable number) which is affected. The earlier definition discriminates more finely than the new one which, in modern terms, is an equivalence relation.  We shall speak here of \emph{old} measurable numbers and \emph{new} measurable numbers. For the old numbers equality was not transitive, for the new numbers equality is transitive (and reflexive and symmetric). Perhaps Bolzano was one of the first to see the need for an equivalence relation, and the related equivalence classes, or factor-classes. It is interesting to note that in (Klein 1908) two separate themes in the development of analysis are identified: the Weierstrassian approach (in the context of an Archimedean continuum), and an approach with indivisibles and/or infinitesimals (in the context of a richer non-Archimedean continuum).  This is cited at the beginning of the paper (Bair et al 2008) which continues with extensive commentary but, curiously, no mention of Bolzano whose two definitions of the equality of measurable numbers clearly straddle both approaches described by Klein.

Bolzano proves important properties of the ordering of measurable numbers in \S\S 61 - 79.  We summarise these in the following using modern notation and giving the relevant paragraphs of RZ in parentheses after each result.

\begin{theorem}

Let $A, B, C$ be measurable numbers. 

\begin{enumerate}
\item Transitivity. $((A > B) \wedge (B > C)) \Rightarrow (A > C). \;\;$(\S 63 ) 
\item Linearity. $(A = B \vee A > B \vee A < B). \;\; $ (\S 61, 73) 
\item Unboundedness. $(\forall A)(\exists B)(\exists C)((B < A) \wedge  (A < C)).  \; \;$(\S 70) 
\item Density. $(A < C) \Rightarrow (\exists B)((A < B) \wedge (B < C)). \; \;$ (\S 79) 
\item Archimedean property. $(\exists n)(\frac{A}{n} < B < n \cdot A). \;\; $ (\S 74) 
\item $(A > B) \Rightarrow (A + C) > (B + C). \; \;$(\S 67)
\end{enumerate}
\end{theorem}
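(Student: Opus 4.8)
The plan is to translate each clause into an elementary statement about the rational \emph{measuring fractions} of the relevant differences and then argue with rationals, importing from the earlier parts of RZ (the algebra of number expressions, and the behaviour of $+$, $-$ and multiplication by a rational on measurable numbers) only a short list of routine lemmas. Fix notation: for a measurable number $A$ and a positive integer $q$ let $\mu_q(A)$ be the integer with $\frac{\mu_q(A)}{q} \le A < \frac{\mu_q(A)+1}{q}$, and call $A$ \emph{genuinely positive} if $\mu_q(A) \ge 1$ for some $q$ (equivalently $A \ge \frac{1}{q}$ for some $q$). Reading Bolzano's revised equality, as his inserted note to \S 54 and \S\S 55--56 indicate it should be read, as ``$A - B$ is infinitesimal'', i.e. $-\frac{1}{q} < A - B < \frac{1}{q}$ for every $q$, one has the trichotomy of notions: $A = B$; or $A > B$ meaning $A - B$ is genuinely positive; or $A < B$ meaning $B - A$ is genuinely positive. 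These are visibly exhaustive, and they are mutually exclusive once one knows the basic consistency property of the measuring fractions of a single expression — no expression $X$ can satisfy both $X \ge \frac{1}{q}$ and $-X \ge \frac{1}{q'}$ — so item (2) is already in hand, and this same consistency property is what secures the symmetry and transitivity of the revised equality that item (2) tacitly presupposes.

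Next I would record the closure lemmas, each immediate from the RZ arithmetic of expressions together with the definition of $\mu_q$: (i) if $X$ is genuinely positive and $Y \ge 0$ (meaning $\mu_q(Y) \ge 0$ for all $q$) then $X + Y$ is genuinely positive; in particular the sum of two genuinely positive measurable numbers is genuinely positive; (ii) if $X$ is genuinely positive and $n$ a positive integer then $\frac{X}{n}$ is genuinely positive, and $nX \ge X$; (iii) every rational is a measurable number, a positive rational is genuinely positive, $A + r$ and $n \cdot A$ are measurable for rational $r$ and positive integer $n$ with $\mu_q(A + r)$ and $\mu_q(n \cdot A)$ determined up to a bounded error by $\mu_q(A)$, and $(A + C) - (B + C)$ is the same measurable number as $A - B$. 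With these the six items are short.

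Item (2) was just disposed of. Item (6): since $(A + C) - (B + C)$ is the same measurable number as $A - B$ by (iii), and $A - B$ is genuinely positive, $A + C > B + C$. Item (1): $A - C = (A - B) + (B - C)$ is a sum of genuinely positive numbers, hence genuinely positive by (i). Item (3): with $p = \mu_1(A)$ take $B = p - 1$ and $C = p + 2$ (measurable by (iii)); then $A - B \ge 1$ and $C - A > 1$. Item (4): from $A < C$ fix $q_0$ with $C - A \ge \frac{1}{q_0}$ and set $B = A + \frac{1}{2 q_0}$ (measurable by (iii)); then $B - A = \frac{1}{2 q_0}$ and $C - B \ge \frac{1}{2 q_0}$ are genuinely positive. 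Item (5), read for genuinely positive $A, B$ (the infinitesimal $B$ that would otherwise defeat it having been identified with $0$ by the revised equality): fix $q_1$ with $B \ge \frac{1}{q_1}$, $q_3$ with $A \ge \frac{1}{q_3}$, and a natural number $N$ with $N - A > 1$ and $N - B > 1$ (e.g. $N = \max(\mu_1(A), \mu_1(B)) + 2$); then for any $n \ge N \cdot \max(q_1, q_3)$ the chains $\frac{A}{n} < \frac{N}{n} \le \frac{1}{q_1} \le B$ and $B < N \le \frac{n}{q_3} \le n A$ hold, each step being a rational inequality, an instance of (ii)--(iii), or a bound already fixed, with a genuine gap at the first link of each chain, so $\frac{A}{n} < B < n A$ by (i).

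I expect the real obstacle — and the reason this is more than a transcription of the classical facts about $\mathbb{R}$ — to be the interplay with the revised equality: a strict inequality $X > 0$ must be \emph{witnessed} by an actual measuring fraction $\frac{p}{q}$ of $X$ with $p \ge 1$, so at each step the gap produced must be shown to be bounded below by a fixed $\frac{1}{q}$ rather than merely to be a positive infinitesimal; this is why in items (4) and (5) the gaps are manufactured out of named rationals such as $\frac{1}{2 q_0}$ and $\frac{1}{q_1}$, and why the closure lemmas (i)--(iii) — precisely the place where Bolzano's construction of arithmetic on infinite number expressions does its work — must be in place first. Establishing those lemmas, and in particular the expression identity $(A + C) - (B + C) = A - B$ and the behaviour of $\mu_q$ under $+$ and multiplication by a rational, is the part that needs genuine care with Bolzano's definitions of the operations on (possibly infinite) expressions; granted them, items (1)--(6) are as above.
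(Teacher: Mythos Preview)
Your reconstruction is coherent and the six arguments work as written, but there is nothing in the paper to compare it against: the authors do not prove Theorem~1 at all. The theorem is offered purely as a \emph{summary}, in modern notation, of order properties that Bolzano himself establishes in \S\S 61--79 of RZ; the parenthetical paragraph numbers after each item are citations, not pointers to proofs supplied in the paper. Immediately after the statement the text moves on to Theorem~2 (the arithmetic closure properties) in the same summarising style.

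That said, your approach is very much in the spirit of the surrounding discussion. You work with the revised equality of \S\S 54--56 exactly as the paper describes it (equality as ``infinitesimal difference'', strict order as the difference having a positive measuring fraction), and your device of witnessing every strict inequality by a named rational gap $\tfrac{1}{q}$ is precisely the point the paper stresses when it contrasts the old and new equality. Your closure lemmas (i)--(iii) correspond to the arithmetic facts the paper collects under Theorem~2 (\S\S 45, 51, 59, 99--121), so in effect you are reproducing the logical dependence the paper records --- ordering rests on the arithmetic of expressions --- only with the details filled in rather than cited. One small caution: in item~(5) you rightly restrict to genuinely positive $A,B$; the paper's bare statement $(\exists n)(\tfrac{A}{n} < B < nA)$ carries that hypothesis only implicitly, and Bolzano's \S 74 presumably makes it explicit, so your reading is the intended one rather than a repair.
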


The next results are about the arithmetic properties of measurable numbers, they are in \S\S 45, 51, 59, 99-121 and are gathered here as follows. 

\begin{theorem}

Let $A, B, C$  are measurable numbers. 

\begin{enumerate}

\item Closure under addition. $A+B$ is a measurable number. $\;\;$ (\S 45) 
\item Closure under multiplication. $A \cdot B$ is a measurable number. $\;\;$ (\S 59, \S 45, \S 51) 
\item $B \neq 0 \Rightarrow \frac{A}{B}$ is a measurable number. $\;\;$ (\S 111)
\item Property of $0$. $A \cdot 0 = 0 \cdot A = 0. \;\;$ (\S 71) 
\item Associativity of multiplication. $A \cdot (B \cdot C) = (A \cdot B) \cdot C. \;\; $(\S 99) 
\item Commutativity of multiplication. $A \cdot B = B \cdot A. \;\;$ (\S 99) 
\item Distributivity. $A \cdot (B + C) = A \cdot B + A \cdot C. \;\; $(\S 101)
\item Rules for fractions such as $(A = B \wedge C \neq 0) \Rightarrow \frac{A}{C} = \frac{B}{C}, B \neq 0 \Rightarrow \frac{A}{B} \cdot B = A, etc. \;\; $(\S\S 113 - 121)

\end{enumerate}

\end{theorem}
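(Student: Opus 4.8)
The plan is to work throughout from the defining property that an expression $S$ is measurable iff for every positive integer $q$ there is an integer $p$ with $\frac{p}{q}\le S<\frac{p+1}{q}$, and to use it in the equivalent form: $S$ is measurable iff it can be \emph{trapped in arbitrarily small rational intervals}, i.e.\ for every rational $\varepsilon>0$ there are rationals $u\le v$ with $v-u\le\varepsilon$ and $u\le S<v$ (take $q>1/\varepsilon$ in one direction; in the other direction, a tiny trapping interval pins down $S$ relative to the marks of the $q$-ruler, the one genuinely delicate point being a $S$ lying infinitely close to a ruler mark, which is exactly where Bolzano's infinitely small numbers intervene). I also record that every measurable $S$ is bounded, $|S|<n$ for some integer $n$, read off from the $q=1$ fraction. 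To prove that a compound expression $T$ built from $A,B,C$ is measurable it then suffices to trap $T$ in rational intervals of width tending to $0$ whose endpoints are obtained by ordinary rational interval-arithmetic from trapping intervals for $A,B,C$; to prove an identity $X=Y$ it suffices, by Bolzano's revised definition, to trap $X$ and $Y$ in one common rational interval of arbitrarily small width (equivalently, to make $X-Y$ receive a measuring fraction $0/q$ for every $q$).

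For closure under addition (1): from $u_A\le A<v_A$ and $u_B\le B<v_B$ one has $u_A+u_B\le A+B<v_A+v_B$, an interval of width $(v_A-u_A)+(v_B-u_B)$, made $\le\varepsilon$ by trapping $A,B$ each within $\varepsilon/2$. For closure under multiplication (2): after determining the sign of $A$ and of $B$ one may take the trapping intervals with endpoints of that sign; writing $M$ for a common bound on $|A|,|B|$, the product $A\cdot B$ is then trapped in an interval of width $\le M\bigl((v_A-u_A)+(v_B-u_B)\bigr)$, again made small. The exceptional case, where $A$ (or $B$) is infinitely small, is segregated: there $A\cdot B$ is itself infinitely small, hence measurable with all measuring fractions $0/q$ or $-1/q$. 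For division (3) I would write $\frac{A}{B}=A\cdot\frac1B$ and invoke (2), so that it is enough to show $\frac1B$ measurable when $B\ne0$; here the essential extra ingredient is that $B\ne0$ \emph{in the new sense} forces $B$ to be non-infinitesimal, so that $|B|\ge\frac1m$ for some integer $m$ (a nonzero measuring fraction of $B$ supplies such a bound), whence $\frac1B$ is bounded by $m$ and, from trapping intervals for $B$ with endpoints of one sign and absolute value $\ge\frac1m$, $\frac1B$ is trapped in an interval of width $\le m^{2}(v_B-u_B)$.

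The identities (4)--(8) are all of the shape $X=Y$ for expressions built from $A,B,C$, and the uniform method is: trap $A,B,C$ in small rational intervals and note that the corresponding rational approximants of $X$ and of $Y$ \emph{coincide exactly}, because the identity in question already holds in the field of rationals. Thus for associativity and commutativity of multiplication, $A\cdot(B\cdot C)$ and $(A\cdot B)\cdot C$ (resp.\ $A\cdot B$ and $B\cdot A$) both lie in the interval with endpoints $u_Au_Bu_C$ and $v_Av_Bv_C$ (signs arranged as in (2)), of width $\to0$; for distributivity, $A\cdot(B+C)$ and $A\cdot B+A\cdot C$ both lie in the interval with endpoints $u_A(u_B+u_C)$ and $v_A(v_B+v_C)$; for the property of $0$, the expression $A\cdot0$ is trapped directly in $[0,\varepsilon]$ after fixing the sign of $A$; and the fraction rules of (8) reduce the same way once (2)--(3) guarantee that the quotients involved are measurable (e.g.\ $\frac{A}{B}\cdot B$ and $A$ are trapped in a common shrinking interval). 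In each case the common shrinking interval forces $X=Y$.

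The main obstacle is not these interval estimates but their interaction with the infinitely small and with the passage to \emph{new} measurable numbers. One must verify (a) that the sign of a measurable expression used in the multiplicative estimates is well defined, which fails to be strict precisely for infinitely small expressions, so those cases must be segregated and shown to behave like $0$; (b) that every operation respects Bolzano's equivalence relation, so that the identities (4)--(8) are meaningful for new measurable numbers and independent of the expressions chosen to represent them; and (c) that ``$B\ne0$'' in (3) is read in the new sense, since a merely nonzero infinitely small $B$ would make $\frac1B$ unbounded and hence not measurable. Getting (a)--(c) right---rather than the routine arithmetic---is where the real work lies, and it is also, as the surrounding discussion of $\S\S$ 99--121 indicates, where Bolzano's own treatment is at its most intricate.
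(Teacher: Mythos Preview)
The paper does not give a proof of this theorem at all: it is a summary statement with pointers to the relevant paragraphs of Bolzano's \emph{Reine Zahlenlehre} (\S\S 45, 51, 59, 71, 99--121). So there is no ``paper's own proof'' to compare against in the strict sense; the only material the paper offers is, later in Section~3, a glimpse of Bolzano's actual argument for closure under addition in \S 45, which proceeds by explicit case analysis on the two-equation form $S=\frac{p}{q}+P_1=\frac{p+1}{q}-P_2$ and, in the final oscillating case, arrives at $A+B=\frac{p^1+p^2+1}{q}+P^{13}-\Omega^1=\frac{p^1+p^2+1}{q}-P^{14}+\Omega^2$ before declaring the sum equal to the rational endpoint.

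Your proposal is a perfectly reasonable modern reconstruction, but it follows a different route from Bolzano's: you replace the fixed $q$-ruler definition by the ``trapping in arbitrarily small rational intervals'' characterisation and then do interval arithmetic. This is essentially the Laugwitz-modified framework (Definition~1 in the paper), not Bolzano's original one, and you are candid about this---your parenthetical remark that the passage from trapping to measurability is ``genuinely delicate'' when $S$ sits infinitely close to a ruler mark is exactly the point at which the equivalence fails for the unmodified definition, and is exactly why the paper's alternating-series example $A+B$ is trappable but not measurable in Bolzano's original sense. So your argument for (1) is correct under Laugwitz's definition but would not go through under Bolzano's, whereas Bolzano's own \S 45 argument works (on his terms) precisely because of the case-by-case handling your approach is designed to avoid. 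What your method buys is uniformity and brevity across (1)--(8); what Bolzano's buys is fidelity to his actual definition. Your closing diagnosis (a)--(c)---that the real work lies in segregating infinitesimals, checking compatibility with the revised equality, and reading $B\neq 0$ in the new (non-infinitesimal) sense---is well taken and aligns with the paper's own emphasis on the unrevised \S 54 and the surrounding difficulties.
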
 
Bolzano can now finally prove the sufficiency of the \emph{Bolzano-Cauchy} convergence criterion in \S 107. It is what (Rusnock 2000) deservedly calls the \emph{Bolzano-Cauchy theorem}. He stated it already in (Bolzano 1817/2004), but could not prove the existence of the relevant limit. The following much improved formulation and proof (Russ 2004, 412) is in terms of a sequence of measurable numbers: in modern terms it means that the ordered field of measurable numbers is complete.    

\begin{theorem} 

Suppose the infinitely many measurable numbers $X^1, X^2, X^3, \cdots $ proceed according to such a rule that the difference $X^{n+r} - X^n$, considered in its absolute value always remains smaller than a certain fraction $\frac{1}{N}$ which itself can become as small as we please, providing the number $n$ has first been taken large enough. Then I claim there is always one and only one single measurable number $A$, of which it can be said that the terms of our series approach it indefinitely. 
\end{theorem}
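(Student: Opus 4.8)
The plan is to construct the limit number $A$ explicitly as a measurable expression, verify it is measurable, show the sequence converges to it, and then establish uniqueness. This mirrors the classical "Cauchy sequence of reals converges" argument, but carried out in Bolzano's framework where a measurable number is given precisely by its system of measuring fractions $p/q$.

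First I would construct, for each positive integer $q$, the candidate measuring fraction $p_q/q$ for $A$. The idea: since the tail of the sequence is eventually within $1/N$ of itself for $N$ as large as we like, pick $n$ large enough that all of $X^n, X^{n+1}, \dots$ lie within a small neighbourhood (of width much less than $1/q$), then take the measuring fraction of $X^n$ for a suitably finer denominator and read off the integer $p$ with $\frac{p}{q} \le X^m < \frac{p+1}{q}$ for all large $m$. One must check this $p = p_q$ is well-defined (independent of which large $n$ one used) and that the resulting family $\{p_q/q\}$ is coherent, i.e. actually arises as the measuring system of some number expression $A$ — for instance the expression whose value for denominator $q$ is $p_q/q$, or more concretely an expression built from the $X^n$ themselves such as a suitable "diagonal" combination. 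Here one leans on Theorem 2 (closure under the arithmetic operations) to guarantee that whatever expression one writes down is genuinely measurable, and on Theorem 1 (density, Archimedean property, linearity) to manipulate the inequalities.

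Next I would verify convergence: given any $\frac{1}{M}$, produce $n$ such that $|X^{m} - A| < \frac{1}{M}$ for all $m \ge n$. This follows by combining the Cauchy hypothesis $|X^{m+r} - X^m| < \frac{1}{N}$ with the construction of $A$: $A$ lies in (the closure of) every interval that eventually contains all the $X^m$, so the distance from $X^m$ to $A$ is controlled by the same $\frac{1}{N}$, up to a bounded factor absorbed by enlarging $n$. The ordering and additive results (Theorem 1, items 1, 4, 6, and the Archimedean property, item 5) are exactly what is needed to turn the informal "$A$ is squeezed between nearby terms" into a rigorous bound on $A - X^m$ as a measurable number whose measuring fractions are all of the form $0/q$ below the threshold — i.e., genuinely small in the \emph{new} sense of equality.

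Finally, uniqueness: if $A$ and $A'$ both satisfy the conclusion, then for every $\frac{1}{M}$ we get $|A - A'| \le |A - X^n| + |X^n - A'| < \frac{2}{M}$ for suitable $n$; since $\frac{2}{M}$ can be made smaller than any prescribed bound, the measurable number $|A - A'|$ has measuring fraction $0/q$ for every $q$, hence equals $0$ under the \emph{new} definition of equality, so $A = A'$. The main obstacle I anticipate is the first step: showing that the family of locally-stabilised integers $p_q$ genuinely fits together into a \emph{single} measurable number expression in Bolzano's restrictive sense — this is precisely the point where the rational intervals, being merely dually directed rather than nested, make the bookkeeping delicate, and it is presumably where Bolzano's own proof was felt by later critics to be vulnerable. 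Everything after that is a routine $\varepsilon$-style estimate translated into the language of measuring fractions.
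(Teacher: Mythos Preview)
Your outline is sound and would yield a correct proof, but it is not the route the paper reports Bolzano as taking. Bolzano does not treat the Cauchy condition uniformly; instead he splits into three cases according to whether the sequence $X^1, X^2, X^3, \dots$ is non-decreasing, non-increasing, or alternating. In the monotone cases he argues directly that for each $q$ the measuring fractions $p/q$ of the terms $X^n$ eventually stabilise, and these stable values furnish the complete system of measuring fractions of the (conjectural) limit $A$; uniqueness is then checked. The alternating case is reduced to the monotone ones by extracting a monotone subsequence and showing its limit serves for the whole sequence.

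The difference is instructive. Your direct construction of $p_q$ from the Cauchy hypothesis handles all sequences at once, but---as you rightly flag---it leaves you with the delicate task of exhibiting a single infinite number \emph{expression} whose measuring fractions are the $p_q/q$, which in Bolzano's framework is not automatic from the data of the fractions alone. Bolzano's monotone reduction buys him exactly this: for a non-decreasing sequence the measuring fraction of $X^n$ at denominator $q$ is itself non-decreasing in $n$ and bounded, hence eventually constant, so the limit's measuring fractions are literally inherited from the terms and no separate coherence argument (or dually-directed-interval bookkeeping) is needed. The price is the extra case analysis and the subsequence extraction in the alternating case; your approach is cleaner by modern standards but confronts head-on the very point where, as you anticipated, Bolzano's critics located the vulnerability.
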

Bolzano distinguishes three cases:  the sequence  $X^1, X^2, X^3, \cdots $ is non-decreasing, non-increasing or alternating. He begins with non-decreasing sequences. He proves that if there is a limit of the sequence then for every $q$ there is a $p$ such that $\frac{p}{q}$ is a measuring fraction of the (conjectural) limit. So we have a complete set of measuring fractions. Bolzano shows that this limit is determined uniquely. The proof for non-increasing sequences is similar. From alternating sequences we can choose subsequences which are either non-decreasing or non-increasing and their limit is the limit of the whole sequence. The proof is long and not easy to follow in some of the details but we support Rusnock's opinion in concluding positively on the logical structure of this proof in (Rusnock 2000, 188).

This was a vindication of Bolzano's belief, announced in the \emph{Preface} of his (Bolzano 1817/2004),  that with a `\emph{correct} concept of number' the convergence criterion quoted there was indeed sufficient to establish a limit number. That correct concept can, we propose, be identified with the measurable numbers. This concludes the demonstration that, in spite of what we would now regard as some gaps and confusions,  Bolzano's domain of measurable numbers is a complete linearly ordered field and so isomorphic to the real numbers as we know them today.

\section{Are Measurable Numbers really Real?}
\label{really}

It is an irony that the late Bob van Rootselaar, who did so much work in careful transcription and editing of Bolzano's mathematical diaries, was also one of the severest critics of Bolzano's work on real numbers. Very soon after the publication of (Rychl\'{i}k 1962) there appeared (van Rootselaar 1963) in which the author declares in the opening two pages that, 
\begin{quotation}
Bolzano's elaboration [of measurable numbers] is quite incorrect, and that the more advanced part of Bolzano's theory is inconsistent.
\end{quotation} 
One might have supposed this to be largely due to the fact that it only came to light in (Bolzano 1976) how some significant improvements in the content of Bolzano's work were revealed by the more thorough and detailed reading of the manuscript version by Berg. For example, the revision to the equality criterion mentioned above was not legible for Rychl\'{i}k and was omitted, and other parts that were deleted by Bolzano, were included by Rychl\'{i}k. But this is evidently not the cause of van Rootselaar's negative views. He refers, long after publication of Berg's transcription, to the obscure, but interesting, work (Ide 1803) and concludes that 
\begin{quotation}Both the theories [of Ide and Bolzano] presuppose the existence of the real numbers$ \dots$ progress is made only with a theory such as that of Cantor. (van Rootselaar 2003).
\end{quotation}
On the contrary, we wish to support here the claim that Bolzano did not{---}in contrast to the situation in 1817{---}presuppose the existence of the reals at all and that, in fact, his theory has a close resemblance to that of Cantor and to that of Dedekind.
  
Soon after the criticism of van Rootselaar came a strong rejoinder in (Laugwitz 1965) pointing out that it only needed a small change in the definition of infinitely small quantity in order to rectify many of Bolzano's proofs and results.  After such a change it can indeed be viewed as a consistent theory of real numbers. Following the publication by Berg of the improved reading of Bolzano's work it was discovered that Bolzano had already made the change that Laugwitz recommended and so (Laugwitz 1982) was able to fully endorse Bolzano's work as a theory of the real numbers.
  
Subsequently there have been a variety of commentators with a wide spectrum of views on Bolzano's work on measurable numbers, certainly enough to justify the remark that, 
\begin{quotation} there is perhaps no area of Bolzano's research about which there is less agreement than his theory of real numbers$\dots$ Bolzano's analyses were a preamble to his theory of measurable numbers, which is itself a tangled thicket of issues, much disputed in the literature  (Simons 2003, 118).
\end{quotation} 
We cannot give here any comprehensive study of the debates but should refer at least, further to those already mentioned, the detailed studies in (Spalt 1991), (Sebestik 1992) and (Rusnock 2000). The last-cited work sums up the difficulties well: \begin{quotation} Bolzano's theory of measurable numbers as it has come down to us is obviously in fairly rough shape….\end{quotation}   
Then after some valuable detailed discussion of the proof in RZ \S 107 he concludes,  \begin{quotation} . . . on the essential point of conceptual structure, Bolzano was almost entirely successful in characterizing the reals. (Rusnock 2000, 184-188). 
\end{quotation}
So much for the commentary in the literature at a high level. We now give some indication of the detailed discussion and interpretation of what Bolzano was doing in this rich final section of RZ and the direction of our own thinking on the main themes. 
  
The motivation for the title of the final section of RZ as \emph{Infinite Quantity Concepts}  (or also \emph{Infinite Quantity Expressions}) is clear. The previous three sections of the work were all concerned explicitly with rational numbers. Number expressions with only finitely many arithmetic operations will only yield rational number results. So to address the construction of irrational numbers it was essential to consider expressions with infinitely many operations. Some commentators seem to have been distracted into a focus on how best to interpret such expressions when the central, over-riding concept{---}that of \emph{measurable} number{---}is one which applies to both infinite and finite number expressions. And a measurable number is defined in terms of two equations or, equivalently, by an approximating interval as described above. 
  
The problem, and the need for some interpretation, with the notion of infinite quantity expression arises because it appears to be very general (just requiring infinitely many arithmetic operations) but the examples given by Bolzano are rather simple.  It appears to allow for expressions such as continued fractions, compounds of multiple continued fractions or any arbitrarily complex expression, even whether or not there is any evident rule for continuing the expression. But the most complex one used by Bolzano in RZ is the one occurring near the end of \S 5:
$$\frac{s (1 + 1 + 1 + \cdots \mbox{in inf.}) - qb}{q (1 + 1 + 1 + \cdots \mbox{in inf.})}.$$	
For the proof in this paragraph it is only claimed that this is a positive quantity: it does not need to be evaluated. Ladislav Rieger, one of the editors of  (Bolzano 1962), suggests  in his \emph{Vorwort} that such infinite number expressions might be interpreted as \lq symbols for effectively described, unbounded, computational procedures on rational numbers.\rq \   But for what procedure is it a symbol? In this case it might be 'natural' to say we obtain a partial $n$-th value if we sum the first $n$ terms of numerator and the first $n$ terms of the denominator, then divide. But it is easy to construct cases where there is no such natural rule. van Rootselaar attributes the idea of interpreting infinite number expressions as infinite sequences of rational numbers to Rychl\'{i}k and takes up the idea himself with enthusiasm even declaring, 
\begin{quotation}. . . indeed anyone who reads Bolzano's manuscript is bound to accept it [Rychl\'{i}k's interpretation] (van Rootselaar 1963, 169). \end{quotation}
So the expression $\frac{b}{1+1+1 + \dots \mbox{in inf.}}$ is interpreted as the sequence  $\{\frac{b}{n}\}$, and the expression $1 + 2 + 3 + \dots \mbox{in inf.}$  corresponds to the sequence $\{\frac{1}{2}n(n+1)\}$. It might be noted in the former case that $b \over n$ is not strictly a partial sum of the expression (it is the reciprocal of a partial sum) but it is a 'partial computation' to use Rieger's phrase.

Van Rootselaar develops his own elaborate sequence interpretation and uses it to give an interpretion of Bolzano's measurable numbers. Having expressed the measurable number $S$ in terms of an infinite sequence $\{s_n\}$ each term of the sequence is then assigned an approximating interval using terms involving triple subscripts and equations of the form
$$s_n  = \frac{p_q(S)}{q}  + P_{q,1,n}  = \frac{p_q(S) + 1}{q } - P_{q,2,n}$$
where full details are given in (van Rootselaar 1963, 173). Apart from being rather cumbersome a strong argument against carrying the sequence interpretation to such lengths is the one put forward in (Becker 1988). \footnote{This is an unpublished dissertation which we have not seen but rely on the report of it in (Spalt 1991).} Here Becker simply points out the obvious fact that Bolzano, although being fluent at working with infinite sequences, nowhere suggests that he was himself making use of a sequence interpretation for either infinite number expressions or measurable numbers.  However, there is an important aspect of what van Rootselaar is doing in the above formulation which we shall ourselves shortly be endorsing. That is, he is making a very tight association between a measurable number and an infinite collection of approximating intervals. Whether or not this was in Bolzano's mind was never explicitly stated by him. We can only judge from the surrounding context and his usage.
  
Returning to the more limited application of the sequence interpretation{---}that for simple infinite series{---}there is a well-known problem arising in the case of an alternating series in which the partial sums are non-monotonic. The problem occurs already with the expression 
$$S = \frac{1}{2} - \frac{1}{4} + \frac{1}{8} - \frac{1}{16} + \dots \mbox{in inf.}$$ 
which Bolzano himself presents as an example of an infinite number expression in \S 2. If we interpret it as the sequence of partial sums we obtain the sequence $\frac{1}{2}, \frac{1}{4}, \frac{3}{8}, \frac{5}{16}, \dots $ that converges to $\frac{1}{3}$. This is a non-monotonic sequence, the terms are sometimes less, and sometimes greater, than $\frac{1}{3}$. It is impossible to say that for $q = 3$  there is $p$ such that the sequence lies within the interval $[\frac{p}{3}, \frac{p+1}{3})$. It is similar for $q = 3n$ where $n$ is a positive integer. The question is: did Bolzano consider this expression, and consequently all convergent alternating series, as measurable numbers? There are several possibilities:
\begin{enumerate} 
\item Bolzano did not regard the expression $S$ as a measurable number because it does not satisfy his definition of measurability (Sebestik 1992, 370).  Bolzano  generally did not regard sequences oscillating around a rational number as measurable. 
Then there is a problem. Bolzano had proved in \S 45 that measurable numbers are closed under addition. But for instance if we take the two measurable numbers 
$$A\ =\ 2 - 2 + \frac{1}{2} - \frac{1}{2} + \frac{1}{8} - \frac{1}{8} + \dots \mbox{in inf.} $$
$$B \ =\ -1 + \frac{1}{2} + \frac{1}{4} + \frac{1}{8} +  \dots \mbox{in inf.}$$  
then their sum  
$$A+B\ =\ 1 - \frac{3}{2} + \frac{3}{4} - \frac{3}{8} + \frac{3}{16} - \dots \mbox{in inf.}$$  
is not measurable. 
\item Bolzano regarded $S$ as measurable. He speaks about measurable numbers as quantities which we can measure up to $\frac{1}{q}$ for every $q$. And the expression $S$ does have this property. If we return to the picture of a $q$-ruler we see that if we are allowed to shift the ruler then $S$ can be enclosed between two scale divisions. Otherwise, if we do not shift the ruler, the expression value oscillates around one division mark, successively occupying two adjacent intervals. 

If we admit in the definition of measurable numbers that $p$ can be a rational number then the definition would be: $S$ is measurable if for all $q$ there is a \emph{rational} number $r$ and two positive expressions $P_1, P_2$ such that
$$\frac{r}{q}  +  P_1 =  S = \frac{r + 1}{q}  - P_2.$$ 
Or we could repair the definition in this way: $S$ is measurable if for all $q$ there is a \emph{rational} number $r$ and two positive expressions $P_1, P_2$ such that
$$ r  +  P_1 =  S = r  + \frac{1}{q}  - P_2.$$ 
In the both cases $A+B$ would be measurable and generally the sum of two measurable numbers would be measurable too.

\item  Bolzano regarded $S$ as measurable but he had a different concept of an infinite calculation. He considered $S$ as one exactly given quantity which is equal to $\frac{1}{3}.$  He generally considered number expressions with oscillating values and approaching a rational number as being equal to that rational number. There is some indication of this in his proof that the sum of two measurable numbers $A, B$ is measurable in $\S 45.$  Bolzano analyses several cases. The last case is about number expressions which could be interpreted as non-monotonic sequences. Bolzano obtains after many equations the expression  
$$A + B = \frac{p^1 + p^2 + 1}{q} + P^{13} - \Omega^1 =  \frac{p^1 + p^2 + 1}{q} - P^{14} + \Omega^2$$
where by $\Omega^1$  and  $\Omega^2$ Bolzano understands a pair of number which can decrease indefinitely. Therefore 
$$\Omega^1 + \Omega^2 = P^{13} + P^{14}$$ Because $\Omega^1 + \Omega^2$ can decrease indefinitely one can say that also $P^{13}$ and $P^{14}$ can decrease indefinitely. The sum $A + B$ evidently alternates and approaches a fraction $\frac{p^1 + p^2 + 1}{q}$. Bolzano, referring to a similar result for rational numbers (\S 8, 6th Section), says  that
$$A + B = \frac{p^1 + p^2 + 1}{q}$$   
\end{enumerate}

It is hard to know what was in Bolzano's mind here. The manuscript that remains was not a definitive version. In order to deal with the case of oscillating values Laugwitz demonstrated that it suffices to change the equation in Bolzano's definition of measurable numbers to repair the theory (Laugwitz 1982, 407). We give his proposal as follows in a slightly modified form and call it the \emph{Laugwitz condition}.

\begin{definition}\label{Laugwitz}
An infinite number concept $S$ is a \emph{measurable number} if for every positive natural number $q$ there is a natural number $p$ and two positive number expressions $P_1$ and $P_2$ such that the two following equations are satisfied:
$${p-1 \over q} + P_1 = S = {p+1 \over q} - P_2. $$ 
\end{definition}
It is in the spirit of Bolzano's idea that the infinite number expression $S$ is measurable if 
\begin{quotation}
the determination by approximation, or the measurement of $S$, can be carried out as precisely as we please. (RZ \S 6)
\end{quotation}
Measurable numbers under this condition are closed under addition. All other theorems remain true. \footnote{Another interesting possibility for repairing the definition of measurable numbers is in (Rusnock 2000, 185 - 186).} Bolzano himself suggested a generalisation of this modification in \S 122 (Russ, 2004, 428) where he writes: 
\begin{quotation}Perhaps the theory of measurable numbers could be simplified if we formulated the definition of them so that $A$ is called measurable if we have two equations of the form $A\ = \frac{p}{q} + P \ = \ \frac{p+n}{q} - P$, where for the identical $n,  q$ can be increased indefinitely.
\end{quotation}

We return now to the discussion of the sequence interpretation of infinite number concepts which we began earlier in this section (on p.10). It is surely a reasonable proposal that if we associate a number concept $P$  with the sequence $\{p_n\}$ then $P \geq 0$ if and only if there exists $N$ such that $p_n \geq 0$ for all $n \geq N$ (Rusnock 2000, 185).  Then adopting the Laugwitz condition as above, and using modern notation, we shall prove the following.
\begin{theorem} If a rational sequence $\{a_n\}$ represents the infinite number expression $S$ then $\{a_n\}$ satisfies the Bolzano-Cauchy convergence criterion (we call it a BC-sequence) if and only if $S$ is a measurable number. 
\end{theorem}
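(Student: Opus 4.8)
The plan is to prove the two implications separately, working throughout in the sequence interpretation set up above: arithmetic on number concepts is performed termwise on representing sequences, so that for a rational constant $c$ the concept $S-c$ is represented by $\{a_n - c\}$; and, by the convention just adopted, a number concept $P$ represented by $\{p_n\}$ satisfies $P \geq 0$ exactly when $p_n \geq 0$ for all sufficiently large $n$. Thus ``$P_1, P_2$ are positive number expressions'' will be read as ``their representing sequences are eventually non-negative''.

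Consider first the implication that if $\{a_n\}$ is a BC-sequence then $S$ satisfies the Laugwitz condition of Definition~\ref{Laugwitz}. Fix $q$. Applying the Bolzano-Cauchy property at scale $\frac{1}{2q}$ produces an $N$ with $|a_m - a_n| < \frac{1}{2q}$ for all $m, n \geq N$. Let $p$ be the integer nearest to $q\,a_N$, so that $|a_N - \frac{p}{q}| \leq \frac{1}{2q}$; the triangle inequality then gives $\frac{p-1}{q} < a_n < \frac{p+1}{q}$ for every $n \geq N$. Hence $\{a_n - \frac{p-1}{q}\}$ and $\{\frac{p+1}{q} - a_n\}$ are eventually positive, so they represent positive number expressions $P_1$ and $P_2$ for which $\frac{p-1}{q} + P_1 = S = \frac{p+1}{q} - P_2$; since $q$ was arbitrary, $S$ is measurable. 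I would underline here that it is precisely the width-$\frac{2}{q}$ window $(\frac{p-1}{q}, \frac{p+1}{q})$ of the Laugwitz condition{---}rather than Bolzano's original width-$\frac{1}{q}$ window $[\frac{p}{q}, \frac{p+1}{q})${---}that lets this rounding argument succeed for a non-monotonic sequence clustering around a multiple of $\frac{1}{q}$; with Bolzano's original equations the implication genuinely fails, which is exactly the difficulty exhibited by the alternating-series examples discussed above.

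For the converse, suppose $S$ is measurable and let a fraction $\frac{1}{M}$ be given. Choose any $q$ with $\frac{2}{q} < \frac{1}{M}$ and take the corresponding $p, P_1, P_2$ from Definition~\ref{Laugwitz}. Reading the two equations in the sequence interpretation, the concept $P_1 = S - \frac{p-1}{q}$ is represented by $\{a_n - \frac{p-1}{q}\}$ and the concept $P_2 = \frac{p+1}{q} - S$ by $\{\frac{p+1}{q} - a_n\}$; invoking the positivity convention on $P_1$ and $P_2$ there is an $N$ with $\frac{p-1}{q} \leq a_n \leq \frac{p+1}{q}$ for all $n \geq N$, whence $|a_{n+r} - a_n| \leq \frac{2}{q} < \frac{1}{M}$ for all $n \geq N$ and all $r$; this is the Bolzano-Cauchy criterion.

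The only implication with real content is the first, and within it the one delicate step is the passage from ``the tail of $\{a_n\}$ lies in some interval of length below $\frac{1}{q}$'' to ``the tail lies in an interval of the admissible form $(\frac{p-1}{q}, \frac{p+1}{q})$'': this is settled by centring on the nearest multiple of $\frac{1}{q}$ and spending the slack of $\frac{1}{2q}$ on each side, which is why the argument calls on the BC-property at scale $\frac{1}{2q}$ rather than $\frac{1}{q}$. Everything else is routine bookkeeping: checking that termwise operations on representing sequences are compatible with equality of number concepts and with additive cancellation (so that $P_1 = S - \frac{p-1}{q}$ is forced), and that ``eventually non-negative'' is the correct reading of ``positive number expression'' under the stated convention. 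It is also worth noting that the statement presupposes that $\{a_n\}$ already represents $S$; it asserts nothing about which $S$ admit such a representation, so no circularity with the construction of the reals is incurred.
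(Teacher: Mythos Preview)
Your proof is correct and follows essentially the same approach as the paper: both directions use the halving trick (apply BC at scale $\tfrac{1}{2q}$ to get measurability, and conversely bound tail differences by the interval width $\tfrac{2}{q}$). The only cosmetic difference is that you pick $p$ as the nearest integer to $q\,a_N$, whereas the paper locates $a_m$ in an interval $[\tfrac{r}{2q},\tfrac{r+1}{2q})$ and then splits on the parity of $r$ to produce $p$; these amount to the same thing.
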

\begin{proof}

Let $q, m, n, k, p$ be natural numbers, $P_1, P_2$ are strictly positive number concepts.

By the Laugwitz condition, the infinite number concept $S$ is measurable if
$$(\forall q)(\exists p)(\exists P_1)(\exists P_2)({p-1 \over q} + P_1 = S = {p+1 \over q} - P_2).$$ 
It means in our interpretation that 
$$(\forall q)(\exists p)(\exists m)(\forall n > m)({p-1 \over q} \ < \ a_n \ < \ {p+1 \over q}).$$
Remember that $\{a_n\}$ with each $a_n \in \mathbb{Q}$ is a BC-sequence iff 
$$(\forall k)(\exists m)(\forall n > m) |a_n - a_m| < {1 \over k}.$$

(i) Let  $S$ be measurable. We will prove that $\{a_n\}$ is a BC-sequence. Take any $k$. Let $q = 2k$. Then $$(\exists p)(\exists m)(\forall n > m)({p-1 \over 2k} \ < \ a_n \ < \ {p+1 \over 2k}).$$ 

Hence $$(\forall n > m)|a_n - a_m| < |{p+1 \over 2k} - {p-1 \over 2k}| \ = \ {1 \over k}.$$  
(ii) Conversely, let $\{a_n\}$ be a BC-sequence. We will prove that $S$ is a measurable number. Take any $q$. Let $k = 2q$. Then $$(\exists m)(\forall n > m)(|a_n - a_m| < {1 \over 2q}.$$
We know that $a_m \in \mathbb{Q}$ therefore $(\exists r)({r \over 2q} \leq a_m < {r+1 \over 2q}$. Hence ${r-1 \over 2q} = {r \over 2q} - {1 \over 2q}\ <\ a_n\ <\ {r+1 \over 2q} + {1 \over 2q} = {r+2 \over 2q}$. If $r$ is even take $p$ such that $r = 2p$ and if $r$ is odd take $p$ such that $r = 2p - 1$. In the both cases $${p-1 \over q} \ < \ a_n \ < \ {p+1 \over q}$$. 

\end{proof}

\section{Approximating intervals}
\label{intervals}

The title of the final section of RZ with which we are mainly concerned here is \emph{Infinite Quantity Concepts} (or as we have indicated this could be interpreted as \emph{Infinite Number Expressions} or similar variants).  And it lives up to this title for the first 5 sections. But thereafter (and there are more than 100 sections in the thereafter){---}with the exception of \S 21{---}there are very few references to infinite number expressions. The great majority of the working is with intervals as defined in \S 6 by two equations of the form
		$$S = \frac{p}{q} + P_1 \quad S = \frac{p+1}{q} - P_2$$
That is, with intervals of the form $[\frac{p}{q}, \frac{p+1}{q})$ for all values of natural numbers $q$. It is possible we suggest, that the pre-occupation with the sequence interpretation of infinite number expressions has, at least for some commentators, been a distraction from Bolzano's main focus.

As far as we know it has not been observed in the previous literature that there are two rather different ways of deriving infinite rational sequences from Bolzano's concept of measurable numbers. The common approach is that of \emph{partial computation} that depends on the detailed procedural evaluation of an infinite number expression: it is what we have called the \emph{sequence interpretation}. Another approach is to begin from a concept like $\sqrt{2}$, or a rational like $\frac{2}{3}$, for either of which we may derive an algorithm, or a decimal expansion, which will allow us to generate approximating intervals. Choosing the left-hand (or right-hand) endpoints of these intervals then also generates an infinite rational sequence.  In fact both these approaches are at least strongly hinted at in the original publication (Rychl\'{i}k 1962) but they are not equally taken up in the subsequent literature. Both views seem to us legitimate and significant though the sequence interpretation would have limited application to the concept of infinite number expressions in general. But it is Bolzano's analogy between an infinite number concept and a pocket watch (\S 3), and his preference for considering the latter as a single unit{---}without regard to the numerous components{---} that is highly suggestive. It supports the view that Bolzano's idea of a measurable number was that of single value which is not so much represented or calculated from a sequence but rather uniquely \emph{associated} with an infinite collection of approximating intervals.

We should therefore like to re-emphasise the approximating intervals view. It seems in fact to be the dominant view in RZ. Here we shall rely on the researches of others for some of our argument.  In the chapter (Mainzer 1990) it is reported that in the work (Bachmann 1892) there is a systematic use of nested intervals to introduce real numbers. Mainzer indicates in some detail how such an approach might be developed in modern terms. A \emph{rational net} is defined as a sequence of closed strictly nested intervals on rationals with lengths tending to zero. A net $(J_n)$ is said to be \emph{finer} than $(I_n)$ if $J_n \subseteq I_n$ for all $n$. Then two nets $(I_n)$ and $(I'_n)$ are said to be equivalent if there is a net $(J_n)$ finer than each one. He shows how then real numbers can be defined as equivalence classes of nets. It would be possible to follow this up with definitions of arithmetic operations and ordering on these classes and show they form a complete ordered field. Instead Mainzer follows a different, more interesting, strategy. He establishes a direct correspondence between on the one hand the classes of nets and Dedekind cuts, and on the other hand between the net classes and Cantor's classes of  fundamental sequences. These correspondences can be set up rather simply and reveal a satisfying underlying similarity between classes of approximating intervals and the Dedekind and Cantor constructions. Full details are given in (Mainzer 1990). We have already explained that Bolzano's approximating intervals are not, themselves, strictly nested, they are \emph{dually directed}, but this property allows us in a straightforward fashion to derive a strictly nested family of intervals which could therefore be used in such a construction as Mainzer describes. For example, one way to do this for a given measurable number $S$ is to take, instead of the approximating intervals for \emph{all} values of $q$, is to take, for a given fixed value of $q$, say $q_0$, the sub-collection of intervals for all multiples $nq_0$ for natural numbers $n > 1$. In this way each approximating interval is a subset of the previous one and they do form a nested sub-collection of all the approximating intervals. The length of these intervals tends to zero and must have the same unique common point as that of the collection of all intervals, namely that corresponding to the number $S$.

Bachmann was not the only mathematician of the late 19C to define real numbers in terms of nested intervals. The work (Burn 1992) reports that in an appendix to Volume 3 of the \textit{Cours d'analyse} (Jordan 1887) Jordan gave a construction of irrational numbers. It was also using nested intervals but in the later, more influential, editions he gave accounts similar to Dedekind cuts.

\section{Conclusions and Further Work}
\label{conclusion}

In (Rychl\'{i}k 1962) Bolzano's work is hailed in the title as a \lq theory of real numbers\rq. We have tried in this paper to focus on this claim and to give greater emphasis than in previous treatments to the key feature of measurable numbers that they are defined in terms of approximating intervals. In doing so we have inevitably neglected many interesting and relevant themes. We have not dealt properly, for example, with Bolzano's concepts of infinitely small numbers, infinitely large numbers and infinite collections. These are complicated topics, especially if they are to be considered{---}as they must{---}alongside the context of Bolzano's life and his times. 

We cannot now know any more than the evidence that remains to us allows for how exactly Bolzano regarded his infinite number concepts and the approximating intervals of his measurable numbers. So there is an essential element of surmise and speculation in any judgement we make. It has to be acknowledged that Bolzano did not explicitly refer to any such sequence interpretation as many commentators (including ourselves) have constructed. Nor did he explicitly relate measurable numbers to the collection of their approximating intervals in the way we have supportted.  But we have given{---}at least in broad outline{---}good evidence, albeit with the hindsight of a modern perspective, that Bolzano's insights into a \lq correct concept of number \rq \ did indeed constitute the core of the real number system as recognised in modern times.

Bolzano never did re-write the crucial discussion around \S 54 (as he said was needed) relating to the two views of the equality of measurable numbers. But we have highlighted this issue and drawn attention to his premonitions of the idea of equivalence relation. His careful development of many of the algebraic properties of an ordered field is also worthy of further attention.

In addition to the uncertainties surrounding work for which there was never a \lq final version\rq \ we have mentioned also some other specific factors contributing to the controversies in the literature. For example, there is the fact of the two stages (1962 and 1976) in publication of transcriptions of his manuscripts, the distractions, and difficulties, in the \lq sequence interpretation\rq \ of infinite number expressions and the association (misguided in our view) of Bolzano's infinitely small numbers with non-standard analysis.

Finally we mention a theme that calls for further investigation in a future work. The Czech dissident Petr Vop\v{e}nka working in Prague developed in (Vop\v{e}nka 1979) an Alternative Set Theory (AST). Although his motivation did not come from Bolzano's work his ideas seem to us a framework well-suited to Bolzano's mathematical work and particularly his work in relation to infinite numbers and collections.  Vop\v{e}nka's AST makes use of so-called semisets and the phenomenological notion of a \lq horizon\rq  separating finite from infinite numbers. There is an extensive theory of semisets which can be used to support a theory of numbers, and a theory of the continuum.  Some of this work appears to relate quite closely to Bolzano's ideas and results about measurable numbers 
\bigskip

\end{document}